\newtheorem{theorem}{Theorem}
\newtheorem{lemma}{Lemma}
\theoremstyle{definition}
\newtheorem{defi}[theorem]{Definition}
\newtheorem{remark}[theorem]{Remark}
\def\min{\mathop{\mathrm{min}}}
\newcommand{\N}{\mathbb{N}}
\begin{document}
\title{{A Schmidt's subspace theorem  for moving hyeprplane targets over function fields}}
\author{Le Giang}
\address{Department of Mathematics, Hanoi National University of Education, 136 Xuan Thuy Street, Cau Giay, Hanoi, Viet Nam.}
\email{legiang01@yahoo.com}

\author{Tran Van Tan$^*$}
\address{Department of Mathematics, Hanoi National University of Education, 136 Xuan Thuy Street, Cau Giay, Hanoi, Viet Nam.}
\email{tranvantanhn@yahoo.com}

\author{Nguyen Van Thin}
\address{Department of Mathematics, Thai Nguyen University of Education, Luong Ngoc Quyen Street, Thai Nguyen city, Viet Nam.}
\email{thinmath@gmail.com}

\thanks{$^*$ Corresponding author.}
\thanks{2000 {\it Mathematics Subject Classification.} Primary 32H30.}
\thanks{Key words: Diophantine approximation, Schmidt's Subspace Theorem .}

\begin{abstract}
In this paper, we establish a Schmidt's subspace theorem for moving hyeprplane targets   in projective spaces over function fields.

\end{abstract}
\baselineskip=16truept 
\maketitle 
\pagestyle{myheadings}
\markboth{}{}
\section{ Introduction}
\def\theequation{1.\arabic{equation}}
\setcounter{equation}{0}

First of all, we remind some notation and fundamental results of  Diophantine Approximation  for moving hyperplanes (see \cite{RV, ZMQ}).
\begin{defi}\label{defi1.1}
Let $\Lambda$ be an infinite index set.

$(i)$ A {\it moving hyperplane} indexed by $\Lambda$ is a map $H: \Lambda \to \mathbb{P}^{M}(k)^{*}$, defined by $\alpha \mapsto H(\alpha).$

$(ii)$ Let $H_1, \dots, H_q$ be moving hyperplane indexed by $\Lambda$. For each $j=1, \dots, q$ and $\alpha \in \Lambda$ choose $a_{j,0}(\alpha), \dots, a_{j,M}(\alpha) \in k$ such that $H_j(\alpha)$ is cut out by the linear form $L_j(\alpha)=a_{j,0}(\alpha)x_0+\dots+a_{j,M}(\alpha)x_M.$ A subset $A \subset \Lambda$ is said to be {\it coherent} with respect to $\{H_1, \dots, H_q\}$ if for every polynomial
$$ P\in k[x_{1,0}, \dots, x_{1,M}, \dots, x_{q,0}, \dots, x_{q,M}] $$
that is homogeneous in $x_{j,0}, \dots, x_{j,M}$ for each $j=1, \dots, q$, either
$$ P(a_{1,0}(\alpha), \dots, a_{1,M}(\alpha), \dots, a_{q,0}(\alpha), \dots, a_{q,M}(\alpha)) $$
vanishes for all $\alpha \in A$ or it vanishes for only finitely many $\alpha \in A.$

$(iii)$ For a subset $A\subset \Lambda$, we define $\mathcal R_{A}^{0}$ to be the set of equivalence classes of pairs $(C, a)$, where $C\subset A$ is a subset with finite complement; $a: C \to k$ is a map; and the equivalence relation is defined by $(C, a) \sim (C', a')$
if there exists $C'' \subset C\cap C'$ such that $C''$ has finite complement in $A$ and $a|_{C''}=a'|_{C''}.$ This is a ring containing $k$ as a subring.

$(iv)$  Let $\mathcal H=\{H_1, \dots, H_q\}$ be a set of moving hyperplanes. If $A$ is coherent with respect to $\mathcal H$, and if
 $a_{j,t}(\alpha)\ne 0$ for all, but finitely many $\alpha \in A$, then $\dfrac{a_{j,l}}{a_{j,t}}$ defines an element of 
$\mathcal R_{A}^{0}.$ Moreover, by coherence, the subring of $\mathcal R_{A}^{0}$ generated by all such elements is an 
integral domain. We define $\mathcal R_{A, \mathcal H}$ to be the field of fractions of that integral domain.
\end{defi}

Let $k$ be an algebraic number field of degree $d$. Denote by $M(k)$ for the set of places (equivalence classes of absolute values) of $k$ and write
$M_{\infty}(k)$ for the set of archimedean places of  $k$. For each $v \in M(k)$, we choose a normalized absolute value $| . |_{v}$ such that
$| p |_{v}=| . |$ on $\mathbb{Q}$ (the standard absolute value) if $v$ is archimedean, whereas for $v$ non-archimedean, $|p|_{v}=p^{-1}$
if $v$ lies above the rational prime $p$. Denote by $k_v$ for the completion of $k$ with respect to $v$ and by $d_v=[k_v: Q_v]$ for the local 
degree. We put $\Vert.\Vert_{v}=| . |_{v}^{d_v/d}$. Then norm $||.||_v$ satisfies the following properties:

$(i)$ $||x||_v\ge 0$, with equality if and only if $x=0;$

$(ii)$ $||xy||_v=||x||_v\cdot ||y||_v$ for all $x, y \in k;$

$(iii)$ $||x_1+\dots+x_n||_v\le B_v^{n_v}\cdot \max \{||x_1||_v, \dots, ||x_n||_v\}$ for all $x_1, \dots, x_n \in k$, $n\in \N$, where $n_v=d_v/d$, $B_v=1$ if $v$ is non-archimedean and $B_v=n$ if $v$ is archimedean. 

 Moreover, for each $x\in k\setminus \{0\}$, we have the {\it product formula}:
$$ \prod_{v\in M(k)}\Vert x\Vert_{v}=1. $$
%For $v\in M(k)$, we also extend $\Vert .\Vert_{v}$ to an absolute value on the algebraic closure $\overline{k}_v.$ 
For $x\in k$, we
 define the {\it logarithmic height} of $x$ by 
 $$h(x):=\sum_{v\in M(k)} \log^{+}\Vert x\Vert_{v},$$ where 
$\log^{+}\Vert x\Vert_{v}= \log \max \{\Vert x\Vert_{v}, 1\}.$

 For $x=[x_0: \dots : x_M] \in \mathbb P^M(k)$, let
 $\Vert x\Vert_{v}=\max_{0 \le i \le M}\Vert x_i\Vert_{v}.$ We define the {\it logarithmic height} of $x$ by
\begin{align}\label{ct11}
h(x):=\sum_{v\in M(k)} \log \Vert x\Vert_{v}.
\end{align}
By the product formula, this definition is independent of the choice of the representations.

 Let $H\subset P^M(k)$ be a (fixed) hyperplane, and let $L=a_0x_0+\dots+a_Mx_M$ be linear form defining $H$ with $a_0, \dots, a_M \in k.$ Take $\Vert L\Vert_{v}=\max_{0 \le i \le M}\Vert a_i\Vert_{v}$. We also define the {\it height} of $H$ as following
$$ h(H) :=h([a_0: \dots : a_M])=\sum_{v\in M(k)}\log \Vert L\Vert_{v}.$$
The {\it Weil function of a point $x$ ($x\not\in H$) with respect to the hyperplane H}  at a place $v\in M(k)$ is defined by
$$ \lambda_{H, v}(x):=\log \dfrac{\Vert x\Vert_{v}\cdot \Vert L\Vert_{v}}{\Vert a_0x_0+\dots a_Mx_M\Vert_{v}}.$$
Let $ S \subset M(k)$ be a finite set containing all archimedean places. The proximitiy function $m_{S}(H, x)$ is defined by
$$ m_{S}(H, x):=\sum_{v\in S} \lambda_{H,v}(x).$$
The following result is due to Ru and Vojta \cite{RV}: 

\noindent{\bf  Theorem A (Schmidt's subspace theorem with moving hyperplane).} {\it Let $k$ be a number field and let $S \subset M(k)$ be a
 finite set containing all archimedian places. Let $\Lambda$ be an infinite index set and let $\mathcal H:=\{H_1, \dots, H_q\}$  be a set of moving 
hyperplane indexed by $\Lambda$ in $\mathbb P^{M}(k).$ Let $x=[x_0: \dots: x_M]: \Lambda \to \mathbb P^{M}(k)$ be a sequence 
of points. Assume that

$(i)$ for all $\alpha \in \Lambda$, $H_1(\alpha), \dots, H_q(\alpha)$ are in general position;

$(ii)$ $x$ is linearly non-degenerate with respect to $\mathcal H$, which means that for each infinite coherent subset $A \subset \Lambda$ with respect to $\mathcal H$, $x_0|_{A}, \dots, x_M|_{A}$ are linearly independent over $\mathcal R_{A,\mathcal H};$

$(iii)$ $h(H_j(\alpha))=o(h(x(\alpha)))$ for all $j=1, \dots , q$ (i.e for any $\delta >0$, $h(H_j(\alpha))\le \delta h(x(\alpha))$ for all but finitely many $\alpha \in A$).

Then, for any $\varepsilon >0,$ there exists an infinite index subset $A\subset \Lambda$ such that
$$ \sum_{j=1}^{q}m_{S}(H_j(\alpha), x(\alpha)) \le (M+1+\varepsilon)h(x(\alpha))$$
holds for all $\alpha \in A.$
}

The purpose of this paper is to extend the above result of Ru-Vojta to the case of  function fields. In order to
 establish the main theorem, we remind some notation and fundamental results of Diophantine Approximation  over function fields in \cite{RW}.

Let $k$ be  an algebraically closed field of characteristic $0$ and $V$ be a non-singular projective variety. We shall fix a projective embedding of $V$ in $\mathbb P^L.$ Denote by $K=k(V )$ the function field of $V$. Let $M_K$ denote the set of prime divisors of $V$(irreducible subvarieties of codimension one). Let $\bf \mathfrak p$ $ \in M_K$ be a prime divisor. As V is nonsingular, the local ring $\mathcal O_p$ at $\bf \mathfrak p$ is a discrete valuation ring. For each $x\in K^{*}$, its order $\textup{ord}_{\bf \mathfrak p}x$ at $\bf \mathfrak p$ is well defined. We can associate to $x$ its divisor
$$ (x) =\sum_{{\bf \mathfrak p} \in M_K}\textup{ord}_{\bf \mathfrak p}(x) {\bf \mathfrak p}=(x)_0-(x)_{\infty},$$
where $(x)_0$ is the zero divisor of $(x)$ and $(x)_{\infty}$ is the polar divisor of $x$, respectively. Let $\deg \bf \mathfrak p$ denote the projective degree of $\bf \mathfrak p$ in $\mathbb P^{L}.$ Then we have the sum formula
$$ \deg (x) =\sum_{{\bf \mathfrak p}\in M_K} {\textup{ord}}_{\bf \mathfrak p}(x)\deg {\bf\mathfrak p}=0. $$
Let ${\bf x}=[x_0, \dots, x_M]\in \mathbb P^M(K)$, denote by
$$ e_{\bf \mathfrak p}({\bf x}):=\min _{0\le i\le M}\{\textup{ord}_{\bf \mathfrak p}(x_i)\}.$$
The (logarithmic) height of ${\bf x}$ is defined by
$$ h({\bf x}):=-\sum_{{\bf \mathfrak p}\in M_K}e_{\bf \mathfrak p}(x)\deg {\bf\mathfrak p}. $$
A (fixed) hyperplane $H$ in $\mathbb P^M(K)$ is the zero set of a linear form  $L(X_0, \dots, X_M)=\sum_{j=0}^{M}a_jX_j,$ 
where $a_j\in K$, $j=0, \dots, M$.
%, that is $$ H=\Big\{[X_0:\dots:X_M]\in \mathbb P^{M}(K)\Big|L(X_0, \dots, X_M)= \sum_{j=0}^{M}a_jX_j=0\Big\}. $$
Denote by
$$ e_{\bf \mathfrak p}(L):=\min_{0\le j\le M} \{{\textup{ord}}_{\bf \mathfrak p}(a_{j})\}.$$
The (logarithmic) height of $H$ is defined by
$$ h(H):=-\sum_{{\bf \mathfrak p}\in M_K}e_{\bf \mathfrak p}(L)\deg {\bf\mathfrak p}. $$
The Weil function of $\lambda_{{\bf \mathfrak p}, H}$ at ${\bf \mathfrak p}$ is defined by the following formula:
\begin{align}\label{ct11a}
\lambda_{{\bf \mathfrak p}, H}({\bf x}):=(\textup{ord}_{\bf \mathfrak p}L({\bf x})-e_{\bf \mathfrak p}({\bf x})-
e_{\bf \mathfrak p}(L)) \deg {\bf \mathfrak p}\ge 0,
\end{align}
where ${\bf x}=[x_0, \dots, x_M] \in \mathbb P^{M}(K) \setminus H.$

\begin{defi}\label{defi1.1a}
Let $\Lambda$ be an infinite index set.

$(i)$ A {\it moving hyperplane} over function field $K$ indexed by $\Lambda$ is a map $H: \Lambda \to \mathbb{P}^M(K)^{*}$, defined by $\alpha \mapsto H(\alpha).$

$(ii)$ Let $H_1, \dots, H_q$ be moving hyperplane indexed by $\Lambda$. For each $j=1, \dots, q$ and $\alpha \in \Lambda$
 choose $a_{j,0}(\alpha), \dots, a_{j,M}(\alpha) \in K$ such that $H_j(\alpha)$ is cut out by the linear form
 $L_j(\alpha)=a_{j,0}(\alpha)x_0+\dots+a_{j,M}(\alpha)x_M.$ A subset $A \subset \Lambda$ is said to be {\it coherent} with respect to $\{H_1, \dots, H_q\}$ if  for every polynomial
$$ P\in K[X_{1,0}, \dots, X_{1,M}, \dots, X_{q,0}, \dots, X_{q,M}] $$
that is homogeneous in $X_{j,0}, \dots, X_{j,M}$ for each $j=1, \dots, q$, either
$$ P(a_{1,0}(\alpha), \dots, a_{1,M}(\alpha), \dots, a_{q,0}(\alpha), \dots, a_{q,M}(\alpha)) $$
vanishes for all $\alpha \in A$ or it vanishes for only finitely many $\alpha \in A.$
\end{defi}
\begin{remark} The above definition is independent of the choice of coefficients $a_{j,0}(\alpha), a_{j,M}(\alpha)$.
\end{remark}
\begin{lemma} There exists an infinite subset $A\in\Lambda$ which is coherent with respect to $(H_1,\ldots, H_q)$.
\end{lemma}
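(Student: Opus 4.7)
The plan is to bypass the standard diagonal argument (which would require countability of the coefficient field) by appealing to the Hilbert basis theorem for $R := K[X_{j,i}: 1\le j\le q,\, 0\le i\le M]$. For each infinite subset $B\subset\Lambda$, I attach the ``vanishing ideal''
\[
I(B) := \bigl\{P\in R : P(a_{1,0}(\alpha),\ldots,a_{q,M}(\alpha))=0 \text{ for all }\alpha\in B\bigr\},
\]
an ideal of $R$ (not required to be multi-homogeneous). The obvious monotonicity holds: if $B'\subset B$ then every polynomial vanishing on $B$ vanishes on $B'$, so $I(B)\subset I(B')$; shrinking the index set can only enlarge the vanishing ideal.

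Since $R$ is Noetherian, the non-empty family $\mathcal F=\{I(B):B\subset\Lambda\text{ infinite}\}$ admits a maximal element (by the ascending chain condition); fix such a maximal element $I(A)$, where $A\subset\Lambda$ is infinite. I claim $A$ is the desired coherent subset. Take any polynomial $P\in R$ that is homogeneous in $X_{j,0},\ldots,X_{j,M}$ for each $j$, and put $Z_P:=\{\alpha\in A:P(a_{j,i}(\alpha))=0\}$. If $P\in I(A)$, then $Z_P=A$ and the first alternative of coherence holds. Otherwise $P\notin I(A)$; I argue by contradiction. Were $Z_P$ infinite, then $Z_P\subset A$ would be an infinite subset of $\Lambda$, so $I(Z_P)\in\mathcal F$, and since $P\in I(Z_P)\setminus I(A)$ we would obtain $I(A)\subsetneq I(Z_P)$, contradicting the maximality of $I(A)$. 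Therefore $Z_P$ is finite, which is the second alternative in the coherence definition.

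No substantive obstacle arises: the Hilbert basis theorem performs all the heavy lifting, and multi-homogeneity of $P$ is invoked only to match the shape of the coherence hypothesis, the Noetherian argument itself being valid for every polynomial in $R$. Consequently the proof is insensitive to the cardinality of $K$ and $\Lambda$, which is the natural analogue of the diagonal extraction used over a countable number field in \cite{RV}.
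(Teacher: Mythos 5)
Your proof is correct, and it is essentially the paper's argument: the paper defers to Lemma~1.1 of Ru--Vojta, whose proof is exactly this maximization of vanishing ideals via the Hilbert basis theorem (so there is no diagonal argument being ``bypassed''; Ru--Vojta already avoid any countability hypothesis this way). The only cosmetic difference is that you let $I(B)$ range over \emph{all} polynomials rather than only the multi-homogeneous ones and require vanishing on all of $B$ rather than on all but finitely many $\alpha\in B$, but the maximality-plus-contradiction step runs identically in either formulation.
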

The proof of this lemma is similar to the proof of Lemma 1.1 in \cite{RV} without any modifications.
\begin{defi} Let $\lambda$ be an infinite index set.

$(i)$ For a subset $A\subset \Lambda$, we define $\mathcal R_{A}^{0}$ to be the set of equivalence classes of pairs $(C, a)$, where $C\subset A$ is a subset with finite complement and $a: C \to k$ is a map; and the equivalence relation is defined by: $(C, a) \sim (C', a')$
if there exists $C'' \subset C\cap C'$ such that $C''$ has finite complement in $A$ and $a|_{C''}=a'|_{C''}.$ This is a ring containing 
$K$ as a subring.

$(ii)$  Let $\mathcal H=\{H_1, \dots, H_q\}$ be a set of moving hyperplanes. If $A$ is coherent with respect to $\mathcal H$, and if
 $a_{j,t}(\alpha)\ne 0$ for all but finitely many $\alpha \in A$, then $\dfrac{a_{j,l}}{a_{j,t}}$ defines an element of 
$\mathcal R_{A}^{0}.$ Moreover, by coherence, the subring of $\mathcal R_{A}^{0}$ generated by all such elements in an integral
 domain. We define $\mathcal R_{A, \mathcal H}$ to be the field of fractions of that integral domain.
\end{defi}
%The existence of the subset $A\subset \Lambda$ which is {\it coherent } with respect to $\mathcal H=\{H_1, \dots, H_q\}$ is proved similarly to
%Lemma 1.1 in \cite{RV}.
\begin{remark}{\label{re:1.3}} Let $B\subset A\subset\Lambda$ be two infinite index subsets. Then it's clear that if $A$ is coherent then so is $B$, and $\mathcal{R}_{A,\mathcal{H}}=\mathcal{R}_{B,\mathcal{H}}.$ 
\end{remark}
\begin{defi}\label{defi1.1b} Let $\mathcal H=\{H_1, \dots, H_q\}$ be a set of moving hyperplanes indexed by $\Lambda.$ 
  Let ${\bf x}=[x_0: \dots; x_M]: \Lambda \to \mathbb P^{M}(K)$ be a sequences of points. We say that $x$ is {\it nondegenerate 
respect to } $\mathcal H$ if for each infinite coherent subset $A\subset \Lambda$, the $x_0|_{A}, \dots, x_n|_{A}$ are linearly independent
over $\mathcal R_{A}.$
\end{defi}
Let $x$ be a map $x: \Lambda\longrightarrow\mathbb{P}^M(K)$. A map $(C,a)\in\mathcal{R}^0_{\Lambda}$ is called ``small" with respect to $x$ iff 
$$h(a(\alpha))=o(h(x(\alpha))),$$that means, for every $\epsilon>0$, there exists a subset $C_{\epsilon}\subset C$ with finite complement such that $h(a(\alpha))\leq\epsilon h(x(\alpha))$ for all $\,\alpha\in C_{\epsilon}$.  It is obvious that
$$|ord_\mathfrak{p}(a(\alpha))|=o(h(x(\alpha))).$$
%The set of all "small" maps with respect to $x$ is a subring of $\mathcal{R}^0_{\Lambda}.$
Denote by $\mathcal{K}_x$ the set of all ``small" maps. Then, $\mathcal{K}_x$ is a subring of $\mathcal{R}^0_{\Lambda}.$
 But it's not an entire ring. However, if $(C,a)\in\mathcal{K}_x$ and $a(\alpha)\not=0$ for all but finitely $\alpha\in C$ then we have $\left(C\backslash\{a(\alpha)=0\}, \dfrac{1}{a}\right)\in\mathcal{K}_x$.

%Let $H$ be a moving hyperplane over function field $K$ indexed by $\Lambda$. For each $\alpha\in\Lambda$, choose $a_{0}(\alpha), \dots, a_{M}(\alpha) \in K$ such that $H(\alpha)$ is cut out by the linear form
 %$L(\alpha)=a_{0}(\alpha)x_0+\dots+a_{M}(\alpha)x_M.$
 
 %We say that $H$ moves slowly with respect to $\bf x$ if 
 %$$h(H(\alpha))=o(h(x(\alpha))),$$
% which means that for any given $\delta>0,$ we have $h(H(\alpha))\le \delta h(x(\alpha))$ for all but finitely many $\alpha\in\Lambda.$
 
 %Furthermore, if $a_{j_0}(\alpha)\not=0$ for all but finitely $\alpha\in\Lambda$ then
 %$$h(\frac{a_j}{a_{j_0}}(\alpha))=o(h(x(\alpha))), j=0,\ldots,M.$$
 %
 %Therefore, $$|ord_\mathfrak{p}(\frac{a_j}{a_{j_0}}(\alpha))|=o(h(x(\alpha))).$$

Now, our Schmidt's subspace theorem with moving hyperplanes over function fields is stated as follows:

\begin{theorem}\label{th1}  Let $K$ be the function field of a nonsingular variety $V$ defined over an algebraically closed 
field $k$ of characteristic zero. Let $S$ be a finite set of prime divisors of $V$.  Let $\Lambda$ be an infinite index set and let $H_j$ be
moving  hyperplanes in $\mathbb P^{M}(K)$, $j=1, \dots, q,$ indexed by $\Lambda.$  Let ${\bf x}=[x_0: \dots: x_M]: \Lambda \to 
\mathbb P^{M}(K)$ be a sequence of points. Assume that

$(i)$ $H_1(\alpha), \dots, H_q(\alpha)$ are in general position  for each $\alpha \in \Lambda$; 

$(ii)$ ${\bf x}$ is linearly nondegenerate with respect to $\mathcal H=\{H_1, \dots, H_q\}$;
 
$(iii)$ $h(H_j(\alpha))=o(h(x(\alpha)))$ for all $j=1, \dots , q$.

Then, for any $\varepsilon >0,$ there exists an infinite index subset $A\subset \Lambda$ such that
$$ \sum_{j=1}^{q} \sum_{{\bf \mathfrak p}\in S}\lambda_{{\bf \mathfrak p}, H_j(\alpha)}({\bf x}(\alpha))\le (M+1+\varepsilon)
h({\bf x}(\alpha))+O(1).$$
holds for all $\alpha \in A.$
\end{theorem}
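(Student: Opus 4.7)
The plan is to follow the Ru--Vojta strategy of \cite{RV}, reducing the moving-target assertion to a fixed-target Schmidt subspace theorem for function fields. First I would use the lemma above to pass to an infinite coherent subset $A \subset \Lambda$ with respect to $\mathcal H$. After a further refinement (coherence is preserved under taking infinite subsets, by Remark \ref{re:1.3}), I may assume that for each pair $(j,i)$, either $a_{j,i}(\alpha) = 0$ for every $\alpha \in A$ or $a_{j,i}(\alpha) \neq 0$ for every $\alpha \in A$. Dividing each $L_j(\alpha)$ by one of its nonvanishing coefficients, I regard the coefficient ratios as elements of $\mathcal{R}_{A,\mathcal H}$; hypothesis (iii) guarantees that they lie in $\mathcal{K}_{\mathbf x}$.

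Next I would construct an auxiliary rational map $\Phi : \mathbb P^M \to \mathbb P^N$ built from the coordinate functions $x_0, \dots, x_M$ combined with monomials in a finite set $T \subset \mathcal K_{\mathbf x}$ containing all the coefficient ratios above. Writing $\tilde{\mathbf x} := \Phi \circ \mathbf x$, the key property of this construction is that each moving hyperplane $H_j(\alpha)$ pulls back to a \emph{fixed} hyperplane $\tilde H_j$ in $\mathbb P^N(K)$ modulo a perturbation whose Weil function at $\mathbf x(\alpha)$ is $o(h(\mathbf x(\alpha)))$. Using the general-position hypothesis (i) and a Cramer-rule argument applied to an $(M{+}1)$-tuple of the $L_j(\alpha)$'s that attains the maximum of $\lambda_{\mathfrak p, H_j(\alpha)}(\mathbf x(\alpha))$, one shows that each sum $\sum_{\mathfrak p \in S} \lambda_{\mathfrak p, H_j(\alpha)}(\mathbf x(\alpha))$ agrees with $\sum_{\mathfrak p \in S} \lambda_{\mathfrak p, \tilde H_j}(\tilde{\mathbf x}(\alpha))$ up to an error bounded by the same $o(h(\mathbf x(\alpha)))$.

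The final step is to apply the fixed-target Schmidt subspace theorem for function fields (available in the literature, cf.\ \cite{RW}) to $\tilde{\mathbf x}$ and the fixed hyperplanes $\tilde H_1, \dots, \tilde H_q$ in $\mathbb P^N(K)$. The linear non-degeneracy hypothesis (ii) translates via $\Phi$ into linear non-degeneracy of $\tilde{\mathbf x}$ over $K$ (after possibly passing to a further infinite coherent subset), while hypothesis (iii) together with the definition of $\Phi$ yields $h(\tilde{\mathbf x}(\alpha)) = h(\mathbf x(\alpha)) + o(h(\mathbf x(\alpha)))$. Combining the fixed-target conclusion with these estimates gives the desired bound $(M+1+\varepsilon) h(\mathbf x(\alpha)) + O(1)$ on an infinite subset.

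\textbf{The main obstacle} is the error-term bookkeeping: every use of Cramer's rule or substitution of a small function introduces a Weil-function error proportional to the height of that small function. The correct strategy is to fix a finite generating set $T$ of small functions at the outset (possible because $\mathcal H$ is finite), invoke hypothesis (iii) to ensure each generator has height $o(h(\mathbf x(\alpha)))$, and pass to a single infinite subset where all these errors are simultaneously bounded by $\varepsilon h(\mathbf x(\alpha))/q$. A subtler issue is verifying that the fixed targets $\tilde H_1, \dots, \tilde H_q$ remain in general position in $\mathbb P^N(K)$, which should follow from the coherence of $A$ together with a sufficiently generic choice of $\Phi$.
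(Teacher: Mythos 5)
Your overall strategy—pass to a coherent subset, normalize the coefficient ratios into $\mathcal{K}_{\mathbf{x}}$, embed $\mathbf{x}$ into a larger projective space $\mathbb{P}^N$ via monomials in small functions, pull the moving hyperplanes back to fixed ones, and invoke a fixed-target Schmidt subspace theorem over function fields—is exactly the Ru--Vojta scheme that the paper follows, and your Cramer's-rule step for an $(M{+}1)$-tuple of closest hyperplanes at each $\mathfrak{p} \in S$ is also there. But there is a genuine gap that would prevent the proof from closing: you have not addressed how the constant $M+1$ survives the passage to $\mathbb{P}^N$. If you apply the fixed-target theorem to a point in $\mathbb{P}^N(K)$, the bound you get is $(N+1+\varepsilon)\,h(\tilde{\mathbf{x}}(\alpha))$, and in your construction $N \gg M$; the estimate $h(\tilde{\mathbf{x}}(\alpha)) = h(\mathbf{x}(\alpha)) + o(h(\mathbf{x}(\alpha)))$ does nothing to repair this inflated constant.

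What the paper does to close this gap is the Steinmetz counting device, which your proposal omits entirely. One forms the filtered spaces $\mathcal{L}(s)$ spanned by degree-$s$ monomials in the $\xi_{j,l}$, with dimensions $l(s)$, and uses $\liminf_{s\to\infty} l(s+1)/l(s) = 1$ to fix $s$ with $l(s+1) \le (1+\delta)\,l(s)$. The auxiliary point $P(\alpha)$ then lives in $\mathbb{P}^{(M+1)l(s+1)-1}(K)$ and each Weil function $\lambda_{\mathfrak{p}, H_{j_l(\mathfrak{p})}}(\mathbf{x}(\alpha))$ is deliberately made to appear $l(s)$ times on the left-hand side (once for each basis element $b_1,\dots,b_{l(s)}$ of $\mathcal{L}(s)$). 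Dividing the fixed-target bound $((M+1)l(s+1)+\delta)\,h(P(\alpha))$ by $l(s)$ then yields $(M+1)\,l(s+1)/l(s) + \delta/l(s) \le (M+1)(1+\delta) + \delta$, which is $M+1+\varepsilon$ after choosing $\delta$ small. Without some mechanism of this kind—the multiplicity $l(s)$ on the left and the dimension $(M+1)l(s+1)$ on the right, tied together by the Steinmetz ratio—your reduction to $\mathbb{P}^N$ cannot recover the exponent $M+1$; it would only give a bound with $N+1$ in place of $M+1$, which is strictly weaker and does not prove the theorem.
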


The condition (iii) means that for any given $\delta>0,$ we have $h(H_j(\alpha))\le \delta h(x(\alpha))$ for all but finitely many $\alpha\in A.$ 

\noindent {\bf Acknowledgements:} The second  named author was supported by a grant of Vietnam National Foundation for Science and Technology Development (NAFOSTED)  and the Vietnam Institute for Advanced Studies in Mathematics. He  is currently Regular Associate Member of ICTP, Trieste, Italy.
\section{Proof of Theorem \ref{th1}  }
\def\theequation{3.\arabic{equation}}
In order to prove our result, we recall one result of Schmidt’s subspace theorem for function field, due to  Wang  \cite{JW}. 

 Let $\mathcal{H}=\{H_1,\ldots,H_q\}$ be a set of hyperplanes in $\mathbb{P}^M(K)$ and $H_i(X)=\sum_{j=0}^Ma_{ij}X_j, a_{ij}\in K$ be the corresponding linear form with coefficients in $K$.
 
  The set of affine coordinates of (fixed) hyperplanes in $\mathcal{H}$  is defined by
  $$\mathcal{A}_\mathcal{H}:=\left\{\frac{a_{ij}}{a_{ii_0}}|1\leq i\leq q, 0\leq j\leq M\right\}$$
  where $a_{ii_0}$ is the first non-zero coefficients in $H_i(X),$ (i.e $a_{ij}=0$ if $j<i_0.$)
  
  Let $r$ be a positive integer. We let $V_{\mathcal{A}_\mathcal{H}}(r)$ be the finite dimensional $k-$vector space spanned by
  $$\left\{\prod_{v\in\mathcal{A}_\mathcal{H}}v^{n_v}|n_v\geq 0, \sum n_v=r\right\}$$
  over $k$.
\begin{lemma}\label{lem8} (see  \cite{JW})
Let $H_1,\dots, H_q$ be hyperplanes in $\mathbb P^{M}(K)$ and $S\subset M_K$ be a finite subset of $M_K$. Given $\varepsilon >0,$
one can find effectively a positive integer $r_{\varepsilon}$ and a constant $C_{\varepsilon}$ such that if 
${\bf x}=[x_0:\dots : x_M]\in \mathbb P^{M}(K) \setminus \cup_{i=1}^{q}H_i$ are linearly nondegenerate over $V_{{\mathcal{A}}_{\mathcal H}}(r_\epsilon)$, then the 
inequality holds
$$ \sum_{{\bf \mathfrak p}\in S}\max_{J}\sum_{j\in J} \lambda_{{\bf \mathfrak p}, H_j}({\bf x})\le (M+1+\varepsilon)h({\bf x})+
C_{\varepsilon},$$
and the maximum is taken over all subsets $J$ of $\{1, \dots, q\}$ such that the linear forms $H_j, j\in J$, are linearly independent over $K$. 
Furthermore, $r_{\varepsilon}$ depends only on the set ${\mathcal{A}}_{\mathcal H}$ and the constant $h(\mathcal H).$
\end{lemma}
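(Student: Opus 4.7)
The plan is to reduce the claim to a Subspace-Theorem-type inequality whose linear forms have coefficients in the constant field $k$, where function-field tools (Wronskians and the Mason--Stothers $abc$ inequality) apply directly. This follows the strategy of Wang \cite{JW}.

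First, I would fix an integer $r$ (to be chosen as $r_\varepsilon$ later) and a $k$-basis $\{u_0,\dots,u_N\}$ of the finite-dimensional $k$-vector space $V_{\mathcal A_{\mathcal H}}(r)$. Using this basis, consider the Veronese-type map
\[
\Phi: \mathbb P^M(K) \to \mathbb P^{N'}(K), \quad [x_0:\dots:x_M] \mapsto [\, u_\alpha \cdot x^I \,]_{\alpha,I},
\]
where $I$ runs over multi-indices of weight $r$. Any product $L_{i_1}\cdots L_{i_r}$ of $r$ of the given linear forms (after pulling out the leading coefficients $a_{i,i_0}$) expands as a $k$-linear combination of the coordinates of $\Phi(\mathbf x)$. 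Moreover, the hypothesis that $\mathbf x$ is linearly nondegenerate over $V_{\mathcal A_{\mathcal H}}(r)$ translates exactly into $\Phi(\mathbf x)$ being $k$-linearly nondegenerate in $\mathbb P^{N'}(K)$.

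Next, I would apply the function-field Subspace Theorem for constant-coefficient forms (classical for function fields, built on Mason-style Wronskian arguments) to $\Phi(\mathbf x)$ and the hyperplanes $\widetilde H_J$ corresponding to the degree-$r$ products indexed by subsets $J\subset\{1,\dots,q\}$ for which $\{H_j\}_{j\in J}$ is $K$-linearly independent. For each $\mathfrak p\in S$, the Weil-function identity gives
\[
\sum_{j\in J}\lambda_{\mathfrak p, H_j}(\mathbf x) = \frac{1}{r}\,\lambda_{\mathfrak p,\widetilde H_J}(\Phi(\mathbf x)) + O(1),
\]
while the lift identity yields $h(\Phi(\mathbf x)) = r\, h(\mathbf x) + O_{\mathcal H, r}(1)$. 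Taking $\max_J$, summing over $\mathfrak p\in S$, and applying the constant-coefficient Subspace Theorem, one bounds the left-hand side by $(N+1+\varepsilon')\,h(\Phi(\mathbf x))$ plus a constant depending only on $\mathcal H$ and $r$.

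The main obstacle is to choose $r = r_\varepsilon$ so that the combinatorial loss from the Veronese lift, combined with the auxiliary $\varepsilon'$ in the constant-coefficient inequality, collapses to exactly $(M+1+\varepsilon)$. This requires asymptotic control of $\dim_k V_{\mathcal A_{\mathcal H}}(r)$ (which grows polynomially in $r$ of degree equal to the transcendence degree of $k(\mathcal A_{\mathcal H})/k$) together with precise tracking of the $O(1)$ terms so they depend only on $\mathcal A_{\mathcal H}$ and $h(\mathcal H)$, not on $\mathbf x$. Once $r$ is taken large enough in terms of these data, dividing through by $r$ and absorbing all bounded contributions into a single constant $C_\varepsilon$ produces the stated estimate, with the uniformity of $r_\varepsilon$ flowing from the fact that the dimension and degree asymptotics involve $\mathcal A_{\mathcal H}$ alone.
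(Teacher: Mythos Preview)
The paper does not prove this lemma; it is quoted from Wang \cite{JW} and used as a black box. So there is no in-paper proof to compare against. That said, your outline has a real gap that prevents it from reaching the constant $M+1+\varepsilon$.

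Your map $\Phi$ is a genuine degree-$r$ Veronese in the $x$-variables (you take monomials $x^I$ of weight $r$ and record $h(\Phi(\mathbf x))=r\,h(\mathbf x)$). The target then has dimension $N'+1$ on the order of $\dim_k V_{\mathcal A_{\mathcal H}}(r)\cdot\binom{M+r}{M}$. Applying the constant-coefficient subspace theorem in $\mathbb P^{N'}$ produces the leading factor $N'+1$ (not $N+1$), and no division by $r$ cancels it: whether you lift each $J$ to a single product form or to the full family $\{\prod_{j\in J}L_j^{m_j}:\sum m_j=r\}$, the bookkeeping gives a main term of size at least $(M+1)\dim_k V_{\mathcal A_{\mathcal H}}(r)\,h(\mathbf x)$, which blows up with $r$. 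The asymptotics of $\dim_k V_{\mathcal A_{\mathcal H}}(r)$ that you invoke work against you, not for you.

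What Wang actually does---and what the present paper reproduces in the moving setting inside the proof of Theorem~\ref{th1}---is the \emph{linear} Steinmetz embedding. One chooses $s$ with $l(s+1)\le(1+\delta)l(s)$ (possible because $\liminf l(s+1)/l(s)=1$), fixes a basis $b_1,\dots,b_{l(s+1)}$ of $\mathcal L(s+1)$, and sends $\mathbf x\mapsto[b_\mu x_\nu]\in\mathbb P^{(M+1)l(s+1)-1}$. Here $h$ changes only by $o(h(\mathbf x))$; each normalized form $L_j$ lifts to the $l(s)$ hyperplanes $b_\mu L_j$ ($1\le\mu\le l(s)$), whose coefficients lie in $k$ because $b_\mu\xi_{j,l}\in\mathcal L(s+1)$; and the constant-coefficient subspace theorem yields the factor $(M+1)l(s+1)$. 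Dividing by the multiplicity $l(s)$ gives $(M+1)l(s+1)/l(s)\le(M+1)(1+\delta)$. The Steinmetz ratio $l(s+1)/l(s)\to1$, not a Veronese rescaling, is the mechanism that forces the constant down to $M+1+\varepsilon$; this is the missing idea in your sketch. The integer $r_\varepsilon$ in the statement is then $s+1$, and nondegeneracy over $V_{\mathcal A_{\mathcal H}}(r_\varepsilon)$ is exactly the condition that the $b_\mu x_\nu$ be $k$-linearly independent.
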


\setcounter{equation}{0}

\begin{proof}[Proof of Theorem \ref{th1}]
We  can assume that $q>M+1.$ Let ${\bf x}=[x_0: \dots: x_M]: \Lambda \to  \mathbb P^{M}(K)$ be a sequence of points indexed by 
$\Lambda.$ Since ${\bf x}$  is linearly nondegenerate with respect to $\mathcal H=\{H_1, \dots, H_q\},$ this implies that there exists
an infinite subset $A\subset \Lambda$ such that $x_0|_A, \dots, x_M|_A$ are linearly independent over $\mathcal R_{A}.$ If $B$ is any infinite subset of $A$ then $B$ is still coherent and $x_0,\ldots,x_M$ are still linearly independent over $\mathcal{R}_B$. Therefore, we may freely pass to an infinite subsequences.

 We choose $a_{j,0}, \dots, a_{j,M}$ such that for every $\alpha \in A$, $H_j(\alpha), j=1, \dots, q$ is defined by the equation 
$$ a_{j,0}(\alpha)x_0+\dots+a_{j,M}(\alpha)x_M=0. $$
By  coherence of $A$, for each $j$, there exists $a_{j,j_0}(\alpha)$, one of the coefficients in $H_j(\alpha)$, such that
 $a_{j, j_0}(\alpha)\ne 0$
 for all but finitely many $\alpha \in A.$ Fix this $a_{j,j_0}$, we see that $\xi_{j,l}=\dfrac{a_{j, l}}{a_{j, j_0}}
$ defines an element of
 $\mathcal R_{A,\{H_j\}_{j=1}^{q}}$. Since $h(H_j(\alpha))=o(h(x(\alpha)))$, we have $\xi_{j,l}\in\mathcal{K}_x$. 
 Let $L_j:\Lambda\longrightarrow \mathbb{P}^M(K)^*$ be a map given by 
 $$L_j(\alpha)=\xi_{j,0}(\alpha)x_0+\ldots+\xi_{j,M}(\alpha)x_M$$ 
%From the condition $(i)$, we have $H_1(\alpha), \dots, H_q(\alpha)$ are in general position for each $\alpha \in A.$
By passing to an infinite subsequences, for each ${\bf \mathfrak p}\in S$, there exists a set 
$J({\bf\mathfrak p})=\{j_0({\bf\mathfrak p}), \dots, j_{M}({\bf\mathfrak p})\} \subset \{1, \dots, q\}$ such that 
\begin{align*}
 \textup{ord}_{\bf \mathfrak p}(L_{j_0({\bf\mathfrak p})}(\alpha)({\bf x}(\alpha)))\ge \dots& \ge  
\textup{ord}_{\bf \mathfrak p}(L_{j_{M}({\bf\mathfrak p})}(\alpha)({\bf x}(\alpha)))\\
&\ge \max_{j\not \in \{j_0({\bf\mathfrak p}), \dots, j_{M}({\bf\mathfrak p})\}}  \textup{ord}_{\bf \mathfrak p}
(L_{j}(\alpha)({\bf x}(\alpha)))
\end{align*}
for all $\alpha\in A.$
We have
\begin{align}\label{ctb1}
L_{j_i({\bf\mathfrak p})}(\alpha)({\bf x}(\alpha))
=\xi_{{j_i({\bf\mathfrak p})},0}(\alpha)x_0(\alpha)+\dots+\xi_{{j_i({\bf\mathfrak p})},M}(\alpha)x_M(\alpha),
\hspace{0.2 cm}i=0, \dots, M.
\end{align}
Since $H_{j_i({\bf\mathfrak p})}(\alpha), i=0, \dots, M$ are in general position, by solving the system of linear 
equations (\ref{ctb1}), we have
\begin{align}\label{ctb2}
x_i(\alpha)=
\overset{\sim}\xi_{{j_i({\bf\mathfrak p})},0}(\alpha)L_{j_0({\bf\mathfrak p})}(\alpha)({\bf x}(\alpha))+\dots+\overset{\sim}
\xi_{{j_i({\bf\mathfrak p})},M}(\alpha)L_{j_{M}({\bf\mathfrak p})}(\alpha)({\bf x}(\alpha))
\end{align}
for all $i=0, \dots, M,$  where $\Big(\overset{\sim}\xi_{{j_i({\bf\mathfrak p)},l}}(\alpha)\Big)_{i,l=0}^{M}$
is the inverse of matrix $\Big(\xi_{{j_i({\bf\mathfrak p})},l}(\alpha)\Big)_{i,l=0}^{M}.$ Notice that $\overset{\sim}\xi_{{j_i({\bf\mathfrak p)},l}}\in \mathcal{K}_x.$

Then, from (\ref{ctb2}), we have
\begin{align*}
{\textup{ord}}_{\bf \mathfrak p}(x_i(\alpha))&\ge
\min\{\textup{ord}_{\bf \mathfrak p}(L_{j_0({\bf\mathfrak p})}(\alpha)({\bf x}(\alpha))), \dots,
 \textup{ord}_{\bf \mathfrak p}(L_{j_{M}({\bf\mathfrak p})}(\alpha)({\bf x}(\alpha)))\}\\
&+\min\{\textup{ord}_{\bf \mathfrak p}(\overset{\sim} \xi_{{j_i({\bf\mathfrak p})},0}(\alpha)), \dots,
\textup{ord}_{\bf \mathfrak p}(\overset{\sim} \xi_{{j_i({\bf\mathfrak p})},M}(\alpha))\},
\end{align*}
for all $i=0, \dots, M.$ Thus, we have
\begin{align}\label{ctb3}
e_{\bf \mathfrak p}({\bf x}(\alpha))&\ge
\min\{\textup{ord}_{\bf \mathfrak p}(L_{j_0({\bf\mathfrak p})}(\alpha)({\bf x}(\alpha))), \dots,
 \textup{ord}_{\bf \mathfrak p}(L_{j_{M}({\bf\mathfrak p})}(\alpha)({\bf x}(\alpha)))\}\notag\\
&+\min\{\textup{ord}_{\bf \mathfrak p}(\overset{\sim} \xi_{{j_i({\bf\mathfrak p})},0}(\alpha)), \dots,
\textup{ord}_{\bf \mathfrak p}(\overset{\sim} \xi_{{j_i({\bf\mathfrak p})},M}(\alpha))\}.
\end{align}
Hence, we have
\begin{align}\label{ctb4}
\sum_{j=1}^{q}(e_{{\bf\mathfrak p}}({\bf x}(\alpha))-&\textup{ord}_{\bf \mathfrak p}(L_j(\alpha)(x(\alpha)))=
\sum_{i=0}^{M}(e_{{\bf\mathfrak p}}({\bf x}(\alpha))-\textup{ord}_{\bf \mathfrak p}(L_{j_i({\bf \mathfrak p})}(\alpha)({\bf x}(\alpha))))\notag\\
&+\sum_{\beta_j\not \in J({\bf \mathfrak p}, \alpha)}(e_{{\bf\mathfrak p}}({\bf x}(\alpha))-\textup{ord}_{\bf \mathfrak p}(L_{\beta_j}(\alpha)({\bf x}(\alpha))))\notag\\
&\ge \sum_{i=0}^{M}(e_{{\bf\mathfrak p}}({\bf x}(\alpha))-\textup{ord}_{\bf \mathfrak p}(L_{j_i({\bf \mathfrak p})}(\alpha)({\bf x}(\alpha))))\notag\\
&+(q-M-1)\min\{\textup{ord}_{\bf \mathfrak p}(\overset{\sim} \xi_{{j_i({\bf\mathfrak p})},0}(\alpha)), \dots,
\textup{ord}_{\bf \mathfrak p}(\overset{\sim} \xi_{{j_i({\bf\mathfrak p})},M}(\alpha))\}
\end{align}
Combining with $\overset{\sim} \xi_{{j_i({\bf\mathfrak p})},l}\in \mathcal{K}_x,$, this implies 
\begin{align}\label{ctb5}
\sum_{j=1}^{q}\lambda_{{\bf\mathfrak p}, H_j(\alpha)}({\bf x}(\alpha))&\le \sum_{j=0}^{M}(\textup{ord}_{\bf \mathfrak p}(L_{j_i({\bf \mathfrak p})}(\alpha)({\bf x}(\alpha)))-e_{{\bf\mathfrak p}}({\bf x}(\alpha)))\deg {\bf \mathfrak p}
-(\sum_{j=1}^{q}e_{{\bf\mathfrak p}}(L_j(\alpha)))\deg {\bf \mathfrak p}\notag\\
&+o(h(x(\alpha)))
%&-(q-M-1)\min\{\textup{ord}_{\bf \mathfrak p}(\overset{\sim} \xi_{{j_i({\bf\mathfrak p})},0}(\alpha)), \dots,
%\textup{ord}_{\bf \mathfrak p}(\overset{\sim} \xi_{{j_i({\bf\mathfrak p})},M}(\alpha))\})\deg {\bf \mathfrak p}.
\end{align}

Let $\mathcal L(s)$ be the vector space generated by
$$ \Big\{\xi_{1,0}^{M_{1,0}}\dots \xi_{q,0}^{M_{q,0}}\xi_{1,M}^{M_{1,M}}\dots \xi_{q, M}^{M_{q, M}}
\Big| M_{i,j}\in \N, \sum_{i=1}^{q}\sum_{j=0}^{M}M_{i,j}=s\Big\}.$$
We see that $\mathcal L(s)\subset \mathcal L(s+1).$ For each $s$, let $l(s) =\dim \mathcal L(s).$ As noted by Steinmetz, we have
$$ \lim\inf_{s\to \infty}\dfrac{l(s+1)}{l(s)} =1.$$
Then, for $\delta >0$, we may find $s\in\mathbb{Z}$ such that
$$ l(s+1) \le (1+\delta)l(s)$$

Fix such an $s$. Let $\{b_1, \dots, b_{l(s+1)}\}$ be a basic of $\mathcal L(s+1)$ such that $\{b_1,\dots, b_{l(s)}\}$ is a 
basic of $\mathcal L(s).$ Then 
\begin{align}\label{ctb51}
\Big(b_{\mu}x_{\nu}\Big)_{\mu=1, \dots, l(s+1); \nu=0, \dots, M}
\end{align}
are linearly independent over $K.$
Indeed, we consider the following equality
$$ \sum_{\mu=1, \dots, l(s+1); \nu=0, \dots, M}\gamma_{\mu, \nu}b_{\mu}x_{\nu}=0,$$
where $\gamma_{\mu, \nu} \in K.$
This implies
$$\sum_{\nu=0}^{M}(\sum_{\mu=1}^{l(s+1)}\gamma_{\mu, \nu}b_{\mu})x_{\nu}=0.$$
Since $x_0|_A, \dots, x_M|_A$ are linearly independent over $K$ implies

\begin{align}\label{ctb6}
\sum_{\mu=1}^{l(s+1)} \gamma_{\mu, \nu} b_{\mu}=0,
\end{align}

for all $\nu=0, \dots, M.$
We note that $\{b_1, \dots, b_{l(s+1)}\}$ is a basic of $\mathcal L(s+1).$ Thus, from (\ref{ctb6}), we have
$\gamma_{\mu, \nu}=0$ for all $\mu=1, \dots, l(s+1)$, $\nu=0, \dots, M.$ 
For $j=1, \dots, q$, let $h_j\in \mathcal R_{A}$ define by
\begin{align}\label{ctb6a}
 h_j=\sum_{l=0}^{M}\xi_{j,l}x_l. 
\end{align}

For ${\bf\mathfrak p} \in S$ and for the chosen set $J(\mathfrak{p})=\{j_0({\bf\mathfrak p}), \dots, j_{M}({\bf\mathfrak p})\}
 \subset \{1, \dots, q\}$ satisfying the inequality (\ref{ctb5}), the $b_jh_{j_l({\bf\mathfrak p})}, j=1, \dots, l(s); l=0, \dots, 
M$ can be written uniquely as $K-$ linear combinations of the products $b_{\mu}x_{\nu}$ for $\mu=1, \dots, l(s+1),$ and $ \nu=0, \dots, M.$ In other word, there exists a matrix
$C({\bf \mathfrak p}) \in Mat_{(M+1)l(s){\times}{(M+1)l(s+1)}}(K)$ (the entries are not just in $\mathcal{R}_A$) such that
\begin{align}\label{ctb6a1}
C({\bf \mathfrak p})\cdot \left ( \begin{matrix}
b_1x_0\cr
\cdot\cr
\cdot\cr
b_{l(s+1)}x_0\cr
\cdot\cr
\cdot\cr
b_1x_M\cr
\cdot\cr
\cdot\cr
b_{l(s+1)}x_M
\end{matrix}\right)=\left ( \begin{matrix}
b_1h_{j_0({\bf\mathfrak p})}\cr
\cdot\cr
\cdot\cr
b_{l(s)}h_{j_0({\bf\mathfrak p})}\cr
\cdot\cr
\cdot\cr
b_1h_{j_{M}({\bf\mathfrak p})}\cr
\cdot\cr
\cdot\cr
b_{l(s)}h_{j_{M}({\bf\mathfrak p})}
\end{matrix}\right).
\end{align}
For ${\bf\mathfrak p} \in S, l=0, \dots, M$ and $j=1, \dots, l(s)$, let  $\overset{\sim}H_{l, j}({\bf\mathfrak p})$ be  the hyperplanes
in $\mathbb P^{(M+1)l(s+1)-1}(K)$ defined by the corresponding row in $C({\bf\mathfrak p}).$ This means that if  $c_{ij}(\mathfrak{p})$ denote the elements of
$C({\bf\mathfrak p}),$ then
\begin{align*}
\overset{\sim}H_{l, j}({\bf\mathfrak p})=\{[&y_{1,0}:\dots: y_{l(s+1),0}:\dots :y_{1, M}: \dots : y_{l(s+1), M}]\in 
\mathbb P^{(M+1)l(s+1)-1}(K)\\
&|c_{ll(s)+j, 1}({\bf\mathfrak p})y_{1,0}+\dots+c_{ll(s)+j, (M+1)l(s+1)}({\bf\mathfrak p})y_{l(s+1),M}=0\}.
\end{align*}
Set $$\overset{\sim}L_{l, j}({\bf\mathfrak p})=c_{ll(s)+j, 1}({\bf\mathfrak p})y_{1,0}+\dots+c_{ll(s)+j, (M+1)l(s+1)}({\bf\mathfrak p})y_{l(s+1),M}$$ be linear form defining $\overset{\sim}H_{l, j}({\bf\mathfrak p})$

Because, $H_1(\alpha), \dots, H_q(\alpha)$ are in general position for each $\alpha$, and  $x_0, \dots, x_M$ are linearly independent over
$\mathcal R_{A},$ this implies that $h_{j_l({\bf\mathfrak p})}, l=0, \dots, M$  are linearly independent over $\mathcal R_{A}.$
Hence, by the choice of $b_1, \dots, b_{l(s)}$ we have $(b_jh_{j_l({\bf\mathfrak p})})_{j=1, \dots, l(s)}; l=0, \dots, M$ are linearly independent
over $K.$ Thus, $(\overset{\sim}H_{l, j}({\bf\mathfrak p}))_{l=0, \dots, M; j=1, \dots, l(s)}$  are linearly independent over $K$ 
for each ${\bf\mathfrak p} \in S.$

We denote $K^{A}$ by the set maps $A \to K.$  Then $K^{A}$ is a ring which contains $k$ as a subring (we embed $K$ into $K^{A}$ as 
 constant functions). By  remark \ref{re:1.3}, after removing finitely many elements from $A,$ we may suppose that $a_{j,j_0}(\alpha) \ne 0$
for all $\alpha \in A$, for some $j_0 \in \{0, \dots, M\}.$ Then, $\xi_{j,l}^{\#}=\dfrac{a_{j, l}}{a_{j, j_0}}
$ belongs to $K^{A}.$ We know that $\mathcal L(s)$
is generated by the monomials in the $\xi_{j,l}$ and $b_i$ are polynomials in several variables of $\xi_{j,l},$ 
it follows that they define corresponding
 elements $b_i^{\#} \in K^{A}$, $i=1, \dots, l(s+1).$% Let $L_1^{\#}, \dots, h_q^{\#} \in K^{A}$ be given by (\ref{ctb6a}). 
 Then, after
removing  finitely many elements of $A$, (\ref{ctb6a1})  is true in $K^{A}.$ From now on, we will work in $K^{A}$ instead of $\mathcal R_{A}$ without
the symbol $\#.$

For each $\alpha \in A$, let $P(\alpha)\in \mathbb P^{(M+1)l(s+1)-1}(K)$ be defined by 
\begin{align}\label{ct7a}
P(\alpha)=&[\dots: P_i(\alpha):\dots]=[b_1(\alpha)x_0(\alpha): \dots : b_{l(s+1)}(\alpha)x_0(\alpha): b_1(\alpha)x_1(\alpha):\notag\\
&\dots : b_{l(s+1)}x_1(\alpha): \dots: b_1(\alpha)x_M(\alpha): \dots : b_{l(s+1)}(\alpha)x_M(\alpha)].
\end{align}
We add $(M+1)(l(s+1)-l(s))$ hyperplanes 
$\overset{\sim}H_{t}({\bf\mathfrak p}), t=(M+1)l(s)+1, \dots, (M+1)l(s+1)$ to the family of hyperplanes 
$\overset{\sim}H_{l, j}({\bf\mathfrak p})$, $l=0, \dots, M, j=1, \dots, l(s)$ such that the new family is still linearly independent. Denote by 
$\mathcal {\overset{\sim} H}(\mathfrak{p})=\{\overset{\sim}H_{l, j}({\bf\mathfrak p}) \cup \overset{\sim}H_{t}({\bf\mathfrak p})\}.$

Notice that since $(b_\mu x_\nu)$ are linearly independent over $K$ then we have $P(\alpha)$ is linearly independent over $V_{A_{\mathcal {\overset{\sim} H}(\mathfrak{p})}}\subset K$. Thus, we can apply  Lemma \ref{lem8} to  the family of hyperplanes $\mathcal {\overset{\sim} H}(\mathfrak{p})$  and the points $P(\alpha)\in \mathbb P^{(M+1)l(s+1)-1}(K)$. Thus, there is a finite collection $\mathcal{L}$ of proper linear subspaces of $\mathbb P^{(M+1)l(s+1)-1}(K)$ such that
$$ \sum_{{\bf \mathfrak p}\in S}\sum_{l=0}^{M}\sum_{j=1}^{l(s)} \lambda_{{\bf \mathfrak p}, \overset{\sim} H_{{l, j({\bf \mathfrak p})}}}(P(\alpha))\le((M+1)l(s+1)+\delta)h(P(\alpha))+O(1).
$$
for all $\alpha$ such that $P(\alpha)\not\in\cup_{L\in\mathcal{L}}L.$ By remark \ref{re:1.3}, we may pass to an infinite subsequence satisfying $P(\alpha)\not\in\cup_{L\in\mathcal{L}}L $ for all $\alpha$ in the subsequence. 

Therefore,\begin{align}\label{ct9b}
 \sum_{{\bf \mathfrak p}\in S}\sum_{l=0}^{M}\sum_{j=1}^{l(s)} \lambda_{{\bf \mathfrak p}, \overset{\sim} 
H_{{l, j({\bf \mathfrak p})}}}(P(\alpha))
\le ((M+1)l(s+1)+\delta)h(P(\alpha))+O(1).
\end{align}
for all $\alpha$ in $A.$

We have \begin{align}\label{ct10b}
h(P(\alpha))&=-\sum_{{\bf \mathfrak p}\in M_K}\min_{i}\textup{ord}_{\bf \mathfrak p}(P_i(\alpha))\deg {\bf\mathfrak p}\notag\\
&=-\sum_{{\bf \mathfrak p}\in M_K}\min_{0\le i\le M}\textup{ord}_{\bf \mathfrak p}(x_i(\alpha))\deg {\bf\mathfrak p}-
\sum_{{\bf \mathfrak p}\in M_K}\min_{1\le j\le l(s+1)}\textup{ord}_{\bf \mathfrak p}(b_j(\alpha))\deg {\bf\mathfrak p}\notag\\
&=h({\bf x}(\alpha))+o(h({\bf x}(\alpha))).
\end{align}
 We compute the Weil function $\lambda_{{\bf \mathfrak p}, \overset{\sim} H_{{l, j({\bf \mathfrak p})}}}$ at the points $(P(\alpha)).$ We see that
\begin{align*}
&\lambda_{{\bf \mathfrak p}, \overset{\sim} H_{{l, j({\bf \mathfrak p})}}}(P(\alpha))
=(\textup{ord}_{\bf \mathfrak p}\overset{\sim} L_{{l, j({\bf \mathfrak p})}}(P(\alpha))-e_{\bf \mathfrak p}(P(\alpha))-
e_{\bf \mathfrak p}(\overset{\sim} L_{{l, j({\bf \mathfrak p})}})) \deg {\bf \mathfrak p}\\
&=\left(\textup{ord}_{\bf \mathfrak p}b_j(\alpha)h_{j_l({\bf\mathfrak p})}(\alpha)-e_{\bf \mathfrak p}(P(\alpha))-
e_{\bf \mathfrak p}(\overset{\sim} L_{{l, j({\bf \mathfrak p})}})\right)\deg\mathfrak{p}\\
&=\left(\textup{ord}_{\bf \mathfrak p}b_j(\alpha)+\textup{ord}_{\bf \mathfrak p}h_{j_l({\bf\mathfrak p})}(\alpha)
-e_{\bf \mathfrak p}({\bf x}(\alpha)
-\min_{1\le j\le l(s+1)}\textup{ord}_{\bf \mathfrak p}b_j(\alpha)\right)\deg{\bf \mathfrak p}
-e_{\bf \mathfrak p}(\overset{\sim} L_{{l, j({\bf \mathfrak p}}})) \deg {\bf \mathfrak p}\\
&=\left(\textup{ord}_{\bf \mathfrak p}h_{j_l({\bf\mathfrak p}, \alpha)}(\alpha)
-e_{\bf \mathfrak p}({\bf x}(\alpha))-e_{\bf \mathfrak p}( L_{{l, j({\bf \mathfrak p})}}(\alpha))\right)\deg {\bf \mathfrak p}\\
&+\left(e_{\bf \mathfrak p}( L_{{l, j({\bf \mathfrak p})}}(\alpha))+\textup{ord}_{\bf \mathfrak p}b_j(\alpha)-\min_{1\le j\le l(s+1)}\textup{ord}_{\bf \mathfrak p}(b_j(\alpha) \deg {\bf \mathfrak p}\right)
 \deg {\bf \mathfrak p}-e_{\bf \mathfrak p}(\overset{\sim} L_{{l, j({\bf \mathfrak p}}})) \deg {\bf \mathfrak p}\\
&=\lambda_{{\bf \mathfrak p},H_{j_{l({\bf \mathfrak p}, \alpha)}}}({\bf x}(\alpha))+o(h({\bf x}(\alpha)))+O(1).
\end{align*}
From (\ref{ct9b}), we have
\begin{align}\label{ct9b1}
 \sum_{{\bf \mathfrak p}\in S}l(s)\sum_{l=0}^{M}\lambda_{{\bf \mathfrak p},H_{j_{l({\bf \mathfrak p}, \alpha)}}}
({\bf x}(\alpha)) = \sum_{{\bf \mathfrak p}\in S}\sum_{l=0}^{M}\sum_{j=1}^{l(s)} \lambda_{{\bf \mathfrak p}, \overset{\sim}H_{{l, j({\bf \mathfrak p})},{\bf \mathfrak p}}}(P(\alpha)) \notag\\
\le ((M+1)l(s+1)+\delta)h(P(\alpha))+o(h({\bf x}(\alpha)))+O(1),
\end{align}
for all $\alpha\in A.$
 
 Together with (\ref{ctb5}), we have
 \begin{align}
 \sum_{{\bf \mathfrak p}\in S}l(s)\sum_{l=0}^{q}\lambda_{{\bf \mathfrak p},H_{j_{l({\bf \mathfrak p}, \alpha)}}}
({\bf x}(\alpha)) = \sum_{{\bf \mathfrak p}\in S}\sum_{l=0}^{M}\sum_{j=1}^{l(s)} \lambda_{{\bf \mathfrak p}, \overset{\sim}H_{{l, j({\bf \mathfrak p})},{\bf \mathfrak p}}}(P(\alpha)) \notag\\
\le ((M+1)l(s+1)+\delta)h(P(\alpha))+o(h({\bf x}(\alpha)))+O(1),
\end{align}
for all $\alpha\in A.$ Therefore,
 \begin{align*}
\sum_{j=1}^{q}\lambda_{{\bf\mathfrak p}, H_j(\alpha)}({\bf x}(\alpha))
&\le\left(\dfrac{(M+1)l(s+1)}{l(s)}+\dfrac{\delta}{l(s)}\right)h(P(\alpha))+o(h({\bf x}(\alpha))\\
&\le (M+1+\varepsilon)h({\bf x}(\alpha))+O(1),
\end{align*}
 for all $\alpha \in A.$
This completes the proof of Theorem \ref{th1}.
\end{proof}

\end{document}